\numberwithin{equation}{section}
\newtheorem{tm}{Theorem}%[section] (убрать %, если нумерация по праграфам)
\newtheorem*{tm1}{Theorem 1}
\newtheorem{lem}[tm]{Lemma}
\newtheorem{cor}[tm]{Corollary}
\renewcommand{\Re}{\operatorname{Re}}
\renewcommand{\Im}{\operatorname{Im}}
\newcommand{\la}{\lambda}
\newcommand{\ve}{\varepsilon}
\newcommand{\vp}{\varphi}
\begin{document}
%УДК 517.951 + 517.954

\title[]{
Eigenvalue Asymptotics of Perturbed Self-adjoint  Operators }
\author[]{A.~A.~Shkalikov}
\thanks{This work is supported by Russian Foundation for Basic Research (project  No. 10-01-00423-a )}

% и фондом ИНТАС, грант
%\No~05-1000008-7883.

%УДК 517.9+517.43

\maketitle

%{\hfill {\it Dedicated to the memory of my dear Teacher
%A.G.Kostyuchenko}}

 \vspace{1cm}

{\bf Abstract}.  We study perturbations of  a self-adjoint
positive operator $T$, provided that a perturbation operator $B$
satisfies "local" \ subordinate condition $\|B\varphi_k\|
\leqslant b\mu_k^{\beta}$ with some $\beta <1$ and $b>0$. Here
$\{\varphi_k\}_{k=1}^\infty$ is an orthonormal system of the
eigenvectors of the operator $T$  corresponding to the eigenvalues
$\{\mu_k\}_{k=1}^\infty$.  We introduce the concept of
$\alpha$-non-condensing sequence and prove the theorem on the
comparison of the eigenvalue-counting functions of the operators
$T$ and $T+B$. Namely, it is shown that if $\{\mu_k\}$ is
$\alpha-$non-condensing then the difference of the
eigenvalue-counting functions is subject to relation
$$
|n(r,\, T)- n(r,\, T+B)| \leqslant C\left[ n(r+ar^\gamma,\, T) -
n(r-ar^\gamma,\, T)\right] +C_1
$$
with some constants $C, C_1, a$ and $\gamma = \max(0,\, \beta,\,
2\beta +\alpha -1)\in [0,1)$.

The results of this paper are published in \cite{Sh2}.

\vspace{1cm}
\section{Introduction}
Throughout this paper, $T$ shall stand for a self-adjoint and
bounded below operator with domain $\mathcal{D}(T)$ acting in a
separable Hilbert space $\mathcal{H}$.  We always suppose that $T$
has a discrete spectrum which is denoted by $\{\mu_k\}_{k=
1}^\infty$ and each eigenvalue $\mu_k$ is repeated in the sequence
in accordance with its geometric multiplicity. A complete
orthonormal system of the eigenvectors that correspond to these
eigenvalues we denote by $\{\varphi_k\}_{k= 1}^\infty.$ For
convenience we always assume that $1<\mu_k\leq \mu_{k+1}$ for all
integers $k\geq 1$. Let
\begin{equation}\label{1}
n(r,\,  T) =\sum_{\mu_k < r} 1
\end{equation}
be the eigenvalue-counting function of the operator $T$. We
suppose that there is a positive number $\alpha$, such that
\begin{equation}\label{2}
\overline\lim_{r\to\infty} \frac{n(r,\, T)}{t^\alpha} =C<\infty.
\end{equation}
This condition is natural as it is fulfilled for a large class of
differential operators (ordinary and with partial derivatives on a
bounded domain in $\mathbb R^n$, see \cite{RSS}, for example). In
the case $\alpha =1$ we say that a sequence
$\{\mu_k\}_{k=1}^\infty$ is {\it non-condensing} if there is a
number $l$ such that each segment $(t,t+1],\ t\in \mathbb R^+,$
contains  at most $l$ eigenvalues of the operator $T$. Obviously,
this condition is equivalent to the following: {\it for all $t>1$
the inequality $n(t+0,\, T) - n(t-1,\, T) \leqslant l$ holds}.  In
the case $\alpha \ne 1$ we introduce  the concept of {\it
$\alpha$-non-condensing sequence}. Namely, we say that a sequence
$\{\mu_k\}_{k=1}^\infty$ satisfying the condition \eqref{2} is
{\it $\alpha$-non-condensing} if a sequence
$\{\mu_k^\alpha\}_{k=1}^\infty$ is non-condensing, or equivalently
$n(t^{1/\alpha}+0,\, T) - n((t-1)^{1/\alpha}, \, T) \leqslant l$
with some $l\in \mathbb N$.

The goal of this paper is to obtain results on the distribution of
the eigenvalues for the perturbations $A= T+ B$, provided that the
perturbation operator $B$ satisfies the conditions
\begin{equation}\label{3}
\mathcal D(B) \supset \mathcal D(T),  \quad \|B\varphi_k\| \leq
b\mu_k^\beta,   \quad -\infty< \beta < 1,
\end{equation}
where $b$ is a constant. The main our result is as follows.

\begin{tm1} Let  conditions \eqref{2}  and \eqref{3} be fulfilled and the
sequence $\{\mu_k\}_{k=1}^\infty$ be  $\alpha$-non-condensing.
Assume that
\begin{equation}\label{gamma}
\gamma =\max(0, \, \beta,\,  2\beta+\alpha -1) <1.
\end{equation}
 Then the spectrum of the operator $A=T+B$ consists of
isolated eigenvalues $\{\lambda_k\}$ and there exist positive
constants $a, C$ and $C_1$ such that the eigenvalue-counting
function
$$
n(r,\, A) = \sum_{|\lambda_k| <r} 1
$$
is subject to the relation
\begin{equation}\label{mfor}
|n(r,\, A) - n(r,\, T)| \leq C S_\gamma(r)+ C_1,
\end{equation}
where
$$
S_\gamma(r) = n(r+ar^{\gamma},\, T) -  n(r-ar^{\gamma},\, T).
$$
\end{tm1}
Viewing in mind  applications, it is worth mentioning the
following corollary.

\begin{cor} Assume that
$$
n(r,T) = r^\alpha +O(r^\eta), \qquad \text{with some}\ \,  \eta
<\alpha.
$$
Then under assumptions of Theorem 1 we have
$$
|n(r,\, A) - n(r,\, T)| = O(r^{\alpha +\gamma -1}) + O(r^\eta)
$$
\end{cor}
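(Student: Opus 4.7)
The plan is to invoke Theorem~1 directly and evaluate the right-hand side $S_\gamma(r)$ using the given asymptotic $n(r,T)=r^\alpha+O(r^\eta)$. By definition,
$$
S_\gamma(r)=n(r+ar^\gamma,\,T)-n(r-ar^\gamma,\,T) = (r+ar^\gamma)^\alpha-(r-ar^\gamma)^\alpha + O(r^\eta),
$$
where the $O(r^\eta)$ absorbs both remainders, since $r\pm ar^\gamma\asymp r$ as $r\to\infty$ (we use $\gamma<1$ here so that $ar^\gamma=o(r)$).

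Next I would factor out $r^\alpha$ from the difference of powers and Taylor-expand around $1$: writing $x=ar^{\gamma-1}\to 0$, one has
$$
(r+ar^\gamma)^\alpha-(r-ar^\gamma)^\alpha = r^\alpha\bigl[(1+x)^\alpha-(1-x)^\alpha\bigr] = r^\alpha\bigl(2\alpha x+O(x^3)\bigr),
$$
so the main term equals $2\alpha a\,r^{\alpha+\gamma-1}+O(r^{\alpha+3(\gamma-1)})$. Since $\alpha+3(\gamma-1)<\alpha+\gamma-1$, we obtain
$$
S_\gamma(r)=O(r^{\alpha+\gamma-1})+O(r^\eta).
$$

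Finally, substituting this bound into the inequality $|n(r,A)-n(r,T)|\le C S_\gamma(r)+C_1$ furnished by Theorem~1 yields the desired estimate; the additive constant $C_1$ is absorbed into $O(r^\eta)$ or $O(r^{\alpha+\gamma-1})$ since both are unbounded (one checks that $\alpha+\gamma-1\ge 0$ whenever $\gamma\ge 1-\alpha$, and otherwise $O(1)$ is already dominated by $O(r^\eta)$ for $r$ large, provided $\eta>0$; if $\eta\le 0$ one simply keeps the $C_1$ implicit in the $O$-notation). There is no real obstacle here — the calculation is a routine application of the binomial expansion, and the only point requiring a moment's care is the verification that $ar^\gamma\ll r$ (guaranteed by $\gamma<1$) so that the Taylor expansion is legitimate and the error term $O(r^\eta)$ coming from the two endpoints can be combined without change of exponent.
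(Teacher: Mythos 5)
Your proof is correct and supplies exactly the computation the paper has in mind; the paper itself simply declares ``the proof is obvious'' and gives no details. You correctly apply Theorem~1, substitute the hypothesis $n(r,T)=r^\alpha+O(r^\eta)$ into $S_\gamma(r)$, use $\gamma<1$ to justify treating $ar^\gamma$ as a small perturbation, and the binomial expansion to extract the leading term $O(r^{\alpha+\gamma-1})$. Your caveat about the additive constant $C_1$ is also well taken: the corollary as stated tacitly assumes that at least one of $\alpha+\gamma-1$ and $\eta$ is nonnegative (which holds in all the cases of interest, since $n$ is integer-valued), and you handle the edge case appropriately.
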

The proof is obvious.

 The asymptotic behavior of the eigenvalues of self-adjoint
operators and their perturbations has long been studied by many
authors. Asymptotic formulae for the Sturm-Liouville operator were
  found in 19-th century.  The first general results on the
eigenvalue distribution of the eigenvalues of ordinary
differential operators were obtained by Birkhoff \cite{Bi}, and
for partial differential operators by Weyl \cite{We}. Keldysh
\cite{Ke} proved the first result for relatively compact
perturbations of general self-adjoint operators in Hilbert space
using Tauberian technique. The most complete and sharp results for
compact perturbations and for the so-called $\beta$-subordinate
perturbations of self-adjoint operators  are due to Markus and
Matsaev \cite{MM} (see more details in \cite [Ch.1]{Mar}.
Additional information on eigenvalue distribution of self-adjoint
operators and their perturbations can be found in the book of
Naimark \cite{Na}, the survey of Rosenblum, Solomyak and Shubin
\cite{RSS}, the book of Markus \cite{Mar}, the survey of
Agranovich \cite{Ag}.

Here we remark that the main novelty of our paper is the
subordinate condition \eqref{3}. The  sharpest results on the
comparison of spectra of original and perturbed operators which
are due to Markus and Matsaev \cite{MM},   dealt with subordinate
conditions of the form
\begin{equation}\label{4}
\|Bf\| \leq C\|T^\beta f\| \quad \forall \ f\in \mathcal D(B)
\supset \mathcal D(T^\beta)
\end{equation}
or
\begin{equation}\label{5}
\|Bf\| \leq C\|Tf\|^\beta \|f\|^{1-\beta}\ \ \forall f\in \mathcal
D(T).
\end{equation}

Here $\beta <1$ and $C$ is a constant independent on $f$. We also
note that the second condition here is weaker than the first one.
Obviously, our condition \eqref{3} is essentially weaker than
conditions \eqref{5}. We shall demonstrate this by a simple
example.

Consider the self-adjoint operator $T = i\frac d{dx}$ on the
domain
$$
\mathcal D(T) = \{ f: f\in W^1_2(0, 2\pi), f(0)=f(2\pi)\}
$$
in the Hilbert space $\mathcal H = L_2[0, 2\pi]$. The eigenvalues
of $T$ are equal $\mu_k = k, k\in \mathbb Z$, and the
eigenfunctions coincide with trigonometric system
$\varphi_k=\{e^{ikx}\}_{k=-\infty}^\infty.$  Then, consider as a
perturbation the multiplication operator $Bf = b(x)f(x)$, where
$b(x)\in L_2(0, 2\pi)$ and has sufficiently strong singularity at
some point $x_0\in [0,2\pi]$, say  $b(x) = \frac
1{\ln(x/4\pi)\sqrt x}$. Then $\|B\varphi_k\| \leqslant \|b(x)\|$,
i.e. the condition \eqref{3} holds with $\beta =0$.  It is known
\cite{Ev} that $\mathcal D(|T|^\beta) =W^\beta_2$ for any $0\leq
\beta < 1/2$, where $W^\beta_2 =W^\beta_2(0,2\pi)$ is the Sobolev
space with smooth index $\beta$. It can easily be verified that
the function $f_0(x)= \ln(x/4\pi)$ belongs to the space
$W^\beta_2$ for any $\beta < 1/2$. Hence, for these values of
$\beta$ we have $Bf_0\ne L_2(0,\, 2\pi)$, while $|T|^\beta f_0 \in
L_2(0,\, 2\pi)$. Therefore, condition \eqref{4} is not fulfilled
for any $\beta <1/2$.  Then, the same is true for condition
\eqref{5}, because the validity of \eqref{5} with some $\beta<1$
implies the validity of \eqref{4} with any $\beta' <\beta$ (see
\cite[Ch 1] {Mar}, for example). This example shows that in
particular situations the subordinate condition \eqref{3} can be
much more effective than \eqref{4} or \eqref{5}.  Simultaneously
we have to say that Theorem 1 does not generalize the Markus
-Matsaev theorem \cite{MM}. Condition \eqref{4} or \eqref{5}
implies the validity of Theorem 1 with the function
$$
S(r) = n(r+ar^\beta,\, T) -  n(r-ar^\beta,\, T),
$$
i.e. $\gamma$ can be replaced by $\beta$. Therefore,
% In particular,
%in the case $\alpha =1$ we have to pay a double prise
  assuming a
weaker assumption \eqref{3} instead of \eqref{4} or \eqref{5}, we
get the same estimate as in the Markus-Matsaev theorem only in the
case $\beta \leqslant 2\beta+\alpha -1$, i.e.  $\beta +\alpha
\leqslant 1$.  Otherwise, we have to pay for a weaker assumption
getting  estimate \eqref{mfor} which is less sharp.

%However, it is not sensitive for $\beta=0$, and generally, in the
%case $\alpha\leqslant 1$ we always have $\beta=\gamma$, provided
%that $\alpha +\beta \leqslant 1.$

Finally, we remark that "local" subordinate condition \eqref{3}
was originated in author's paper \cite{Sh0}, where Theorem 1 was
proved  for the case  $\alpha =1$ and $\beta =0$.

\section {Proof of Theorem 1}

First, we shall prove Theorem 1 for the case $\alpha =1$. In this
case the proof is more transparent and technically much easier.
Later on we will explain how to overcome  the technicalities in
the case $\alpha \ne 1$.  While proving this result we will use
the trick of "artificial lacuna" proposed by Markus and Matsaev in
\cite{MM}. However, the implementation of this trick will be
organized in a technically different way. Our plan to prove the
theorem is the following.  First, we prove relation (1.4) for a
fixed $r$, provided that the interval $(r-2ar^{2\beta},
r+2ar^{2\beta})$ does not contain the eigenvalues of the operator
$T$,  where $a$ is a certain number depending on the numbers $b$
and $\beta$ and independent of $r$. For such $r$ we show the
equality $n(r,A) = n(r,T)$. Then, for each fixed $r$ we construct
a finite rank  self-adjoint operator   $K_r$ commuting with $T$
such that
\begin{itemize}
\item[(i)] the operator $T_r = T-K_r$ has no eigenvalues in the interval
$(r-ar^{2\beta}, r+ar^{2\beta})$;

\item[(ii)] the property \eqref{3} remains valid for $T-K_r$ with the
constant   $ 2b$ instead of $b$;

\item[(iii)] The inequality $|n(r, T)- n(r, T_r)|\leqslant S_\gamma(r)$
holds.
\end{itemize}

   Then we apply the Weinstein-Aronszain formula
from the theory of perturbation determinants (see \cite[Ch.5]{Ka})
\begin{equation}\label{WA}
n(r,A) = n(r, T-K_r +B) +\nu(r,h),
\end{equation}
where  $\nu(r,h)$   denotes the difference between the numbers of
zeros and poles of the scalar meromorphic function
$D(\la):=\det(1-K_r(\la -T+K_r-B)^{-1})$, that lie in the
rectangle $\mathcal R$ which is bounded by vertical lines
$\Re\lambda =r, \ \Re\lambda =-R$ and the horizontal  lines
$\Im\lambda = \pm R$ with sufficiently large $R=R(r)$.
%\begin{equation}\label{Par}
%P(h, 2\beta) = \{\la:  |Im\lambda|= h(Re\lambda +h)^{2\beta},
%Re\lambda \geqslant -h\}
%\end{equation}
 We remark that formula \eqref{WA}
 can  easily be proved by using the identity
$$
1-K_r(\la-T+K_r -B)^{-1} =(\la -A)(\la
-T+K_r-B)^{-1}.
$$
Since the operator $T_r = T-K_r$ has no eigenvalues in the
interval $(r-2ar^{2\beta}, r+2ar^{2\beta})$ we get $n(r, T_r +
B)=n(r, T_r)$. On the other hand, by construction we have $|n(r,
T_r) - n(r, T)| \leqslant N=S_\gamma(r)$.  Therefore, we will
prove \eqref{mfor} if we show that the function $|\nu(r,h)|$ is
bounded by the right hand-side of \eqref{mfor}. To show the latter
assertion is the main technical difficulty in the proof of Theorem
1 which we shall divide into several steps.

{\it Step 1.} We will use in the sequel  the following result from
complex analysis.

\begin{lem}\label{lem1}

Let $f$ be a function that is bounded and analytic in the
rectangle
\begin{equation}\label{Pi}
\Pi\ = \{\la: \ \, |\Re\la -r| <c, \ \, |\Im\la| <d\}.
\end{equation}
For  $\delta \in (0,1)$, set  $c' =c(1-\delta)\ \,
and\  d'=d(1-\delta)$, and denote by  $\Pi'$ the rectangle defined
by \eqref {Pi} where  $c$ and $d$ are replaced  by $c'$  and $d'$.
Denote
$$
 M= \sup_{\la\in \Pi} f(\la),\quad  M' = \sup_{\la\in \Pi'} f(\la).
 $$
Then there is a constant $C$ depending on $\delta$ and the ration
$c/d$ and independent of~$f$ such that the following holds:
\begin{itemize}
\item[(i)] The number $n_f(\Pi')$ of zeros of the function~$f$ inside the rectangle
$\Pi'$ is subject to the estimate
\begin{equation}\label{n_f}
n_f(\Pi') \leqslant C(\ln M-\ln M').
\end{equation}

\item[(ii)]
If $\gamma$ is a straight  line segment contained  in  $\Pi'$ that
does not pass through the zeros of $f$ in $\Pi'$, then the
variation of the argument of the function $f$ along $\gamma$ is
subject to the same estimate
\begin{equation}\label{arg}
|\, [\arg f(\la)]_\gamma | \leqslant C(\ln M-\ln M').
\end{equation}
\end{itemize}

%Для любого $\ve\in (0,1)$  найдутся круги с общей суммой радиусов
%$\leqslant \ve$, такие, что  вне объединения $\mathcal E$ этих
%кругов выполняется оценка
%\begin{equation}\label{below}
%\ln|f(\la)| \ge C \ln\ve\ (\ln M -\ln M') + \ln M', \quad
%\la\in\Pi' \setminus \mathcal E.
%\end{equation}

\end{lem}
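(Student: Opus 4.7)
The plan is to conformally reduce the problem to the standard theory of bounded analytic functions on the unit disk. Let $\phi:\mathbb{D}\to\Pi$ be a Riemann map and set $F=f\circ\phi$; then $F$ is bounded and analytic on $\mathbb{D}$ with $|F|\le M$, and $K:=\phi^{-1}(\overline{\Pi'})$ is a compact subset of $\mathbb{D}$ whose hyperbolic diameter depends only on $\delta$ and on the ratio $c/d$. By compactness, choose $\la_0\in\overline{\Pi'}$ with $|f(\la_0)|=M'$, and after precomposing $\phi$ with a M\"obius self-map of $\mathbb{D}$ one may assume $\phi^{-1}(\la_0)=0$, so that $|F(0)|=M'$ and $K\subset\{|w|\le r_1\}$ for some $r_1<1$ depending only on $\delta$ and $c/d$.

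For part (i), I would apply Jensen's formula to $F$ at a fixed radius $r\in(r_1,1)$:
$$
\sum_{|w_k|<r}\log\frac{r}{|w_k|}=\frac{1}{2\pi}\int_0^{2\pi}\log|F(re^{i\theta})|\,d\theta-\log|F(0)|\le \log M-\log M'.
$$
The zeros of $F$ inside $K$ correspond exactly to the zeros of $f$ inside $\Pi'$; all of them lie in $\{|w|\le r_1\}$, so each contributes at least $\log(r/r_1)>0$ on the left. This gives $n_f(\Pi')\log(r/r_1)\le\log M-\log M'$, i.e.\ the bound \eqref{n_f} with $C=1/\log(r/r_1)$.

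For part (ii), I would use the canonical factorization $F=B\cdot e^G$, where $B$ is the Blaschke product over the zeros $\{w_k\}$ of $F$ in $\mathbb{D}$ (convergent since $F\in H^\infty$) and $e^G$ is the nonvanishing part satisfying $|e^G|\le M$. Because $|B(0)|\le 1$ and $|F(0)|=M'$, one has $\Re G(0)\ge\log M'$. Setting $\tilde G:=G-G(0)$, we get $\tilde G(0)=0$ and $\Re\tilde G\le\log M-\log M'$ on $\mathbb{D}$; the Borel--Carath\'eodory inequality then yields $|\tilde G(w)|\le C(\log M-\log M')$ uniformly on a disk $\{|w|\le r_2\}\supset K$, so that the contribution of $e^G$ to the argument variation on $\phi^{-1}(\gamma)$ is bounded by $2C(\log M-\log M')$. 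For the Blaschke factor one writes $[\arg B]_{\phi^{-1}(\gamma)}=\sum_k[\arg b_k]_{\phi^{-1}(\gamma)}$ with $b_k(w)=(w-w_k)/(1-\bar w_k w)$: each summand is at most $2\pi$, the number of $w_k$ in $K$ is controlled by part (i), and for $w_k\notin K$ a direct calculation on the compact set $K$ gives $|[\arg b_k]_{\phi^{-1}(\gamma)}|\le C'(1-|w_k|^2)$. The quantitative Blaschke bound $\sum(1-|w_k|)\le\sum\log(1/|w_k|)\le\log M-\log M'$, which is itself Jensen's formula, then absorbs this tail into the desired estimate.

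The main technical obstacle is this last step, namely the uniform control of the argument variation of the Blaschke factors coming from zeros of $F$ outside the compact set $K$. It is resolved by combining the pointwise geometric bound on each Blaschke factor with the quantitative Blaschke sum derived from Jensen's formula. With these two ingredients together with the Borel--Carath\'eodory step, both \eqref{n_f} and \eqref{arg} are obtained with the same constant structure $C(\log M-\log M')$ claimed in the lemma.
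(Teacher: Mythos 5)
The paper itself does not prove this lemma; the stated proof simply cites Levin's monograph and Lemmas 1.1, 1.3 of Markus--Matsaev. So you are supplying a proof where the paper supplies none. Your overall strategy (conformal transfer to the disk, Jensen's formula, Borel--Carath\'eodory, and a Blaschke/nonvanishing factorization) is the standard one, and part~(i) is essentially complete and correct: the hyperbolic diameter argument does give $K\subset\{|w|\le r_1\}$ with $r_1$ depending only on $\delta$ and $c/d$, and Jensen's formula then yields \eqref{n_f}.

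Part~(ii), however, has a genuine gap in the Blaschke--factor estimate. You assert that each $[\arg b_k]_{\phi^{-1}(\gamma)}$ is at most $2\pi$. That bound would be immediate if the path were a straight line segment (angle subtended at $w_k$ is $\le\pi$), but $\phi^{-1}(\gamma)$ is not a segment --- it is the conformal preimage of a segment, a smooth simple arc. For such an arc the argument of $w-w_k$ need not be monotone and an a priori bound requires a further argument (for instance, a bound on the total turning of $\phi^{-1}(\gamma)$, which does hold here by compactness of $\overline{\Pi'}$, combined with the elementary fact that the argument variation of $\Gamma(t)-p$ along a simple arc of total turning $T$ is at most $\pi+T$). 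As written this step is unjustified. A second, smaller gap: the tail estimate $|[\arg b_k]_{\phi^{-1}(\gamma)}|\le C'(1-|w_k|^2)$ is claimed ``for $w_k\notin K$''. But $b_k'/b_k=(1-|w_k|^2)/[(w-w_k)(1-\bar w_k w)]$, and for $w\in\phi^{-1}(\gamma)\subset K$ the denominator $|w-w_k|$ is \emph{not} bounded below merely by $w_k\notin K$; you need $w_k$ bounded away from $K$. So you should split the zeros at a strictly larger compact set $K'$ (hyperbolic neighbourhood of $K$), treat the finitely many zeros in $K'$ with the turning bound, and reserve the $C'(1-|w_k|^2)$ estimate for $w_k\notin K'$.

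The cleanest repair, and in fact closer to the classical Levin/Markus--Matsaev argument, is to avoid the disk altogether for part~(ii): fix an intermediate rectangle $\Pi'\subset\Pi''\subset\Pi$ with all side lengths comparable, write $f=P\cdot g$ where $P(\la)=\prod_{\la_j\in\Pi''}(\la-\la_j)$ and $g=f/P$ is analytic and nonvanishing in $\Pi''$. Since $\gamma$ is a genuine line segment, each factor $\la-\la_j$ contributes at most $\pi$ to $[\arg P]_\gamma$, and the number of factors is controlled by part~(i) applied to the pair $(\Pi'',\Pi)$. For $g$, the bound $|P|\ge d_0^{\,n}$ on $\partial\Pi''$ (with $d_0=\operatorname{dist}(\partial\Pi'',\overline{\Pi'})$) and $|P(\la_0)|\le(\operatorname{diam}\Pi'')^n$ give $\log|g|\le\log M-n\log d_0$ and $\log|g(\la_0)|\ge\log M'-n\log\operatorname{diam}\Pi''$, after which Borel--Carath\'eodory applied to $\log g-\log g(\la_0)$ bounds $[\arg g]_\gamma$ by $C\bigl(\log M-\log M'+n\log(\operatorname{diam}\Pi''/d_0)\bigr)$, and $n\le C(\log M-\log M')$ folds this into the desired form. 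This route sidesteps both issues above and produces the constant structure of \eqref{arg} directly.
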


\begin{proof} Some versions of this assertion can be found in the
monograph  \cite[Ch.~1]{Le} and in \cite{Sh1}.  In the form
presented here  this result  is contained in \cite[Lemmas 1.1 and
1.3]{MM}.
\end{proof}

{\it Step 2.}

\begin{lem}\label{lem2}
Under assumptions of Theorem 1 the operator $A=T+B$ has discrete
spectrum.
\end{lem}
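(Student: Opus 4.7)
The plan is to show that $B(T-\la)\p$ is a Hilbert-Schmidt operator for every $\la \in \rho(T)$, with operator norm tending to zero as $\la$ moves to infinity along the imaginary axis. This will force $A=T+B$ to be closed with compact resolvent, whence its spectrum is discrete. As a first step I would compute, using that $(T-\la)\p \vp_k = (\mu_k-\la)\p \vp_k$ together with the subordinate bound in \eqref{3},
$$
\|B(T-\la)\p\|_{HS}^2 = \sum_{k=1}^{\infty} \frac{\|B\vp_k\|^2}{|\mu_k-\la|^2} \le b^2 \sum_{k=1}^{\infty} \frac{\mu_k^{2\beta}}{|\mu_k-\la|^2}.
$$
Condition \eqref{2} yields the lower bound $\mu_k \ge c k^{1/\al}$ for all large $k$, so for each fixed $\la$ the tail of the series is controlled by $\sum k^{(2\beta-2)/\al}$. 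This last series converges precisely when $2\beta+\al<2$, which is exactly the third alternative in the definition \eqref{gamma} of $\gamma$ and is therefore implied by the assumption $\gamma<1$. (I would note in passing that the $\al$-non-condensing hypothesis is \emph{not} used here; it is reserved for the sharper counting estimates in the remainder of Theorem~1.)

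Next I would specialize to $\la = it$ with $t \in \mathbb R$. Each summand $\mu_k^{2\beta}/(\mu_k^2+t^2)$ tends to zero pointwise in $k$ as $|t|\to\infty$ and is dominated by the summable sequence $\mu_k^{2\beta-2}$; dominated convergence then gives $\|B(T-it)\p\| \le \|B(T-it)\p\|_{HS}\to 0$. Choosing $t_0$ so large that $\|B(T-it_0)\p\|<1$, the operator $I + B(T-it_0)\p$ is boundedly invertible by Neumann series, and the algebraic identity
$$
A - it_0 = \bigl(I + B(T-it_0)\p\bigr)(T-it_0) \quad \text{on } \mD(T)
$$
shows that $A$ is closed, that $it_0\in\rho(A)$, and that
$$
(A-it_0)\p = (T-it_0)\p \bigl(I + B(T-it_0)\p\bigr)\p.
$$
The right-hand side is the product of the compact operator $(T-it_0)\p$ with a bounded one, hence compact, so $A$ has compact resolvent and therefore discrete spectrum.

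The only nontrivial point in this plan is the summability threshold $2\beta+\al<2$, which is precisely what the hypothesis $\gamma<1$ delivers; everything else is a routine resolvent factorization. A minor care must also be taken to interpret $B$ unambiguously as a linear operator on $\mD(T)$, but this follows from the same Cauchy-Schwarz estimate that bounds the Hilbert-Schmidt norm above, since it guarantees that $\sum c_k B\vp_k$ converges absolutely for every $f=\sum c_k \vp_k \in \mD(T)$.
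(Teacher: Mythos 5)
Your proof is correct, and it takes a different route than the paper's. The paper shows that $BT^{-1}$ is compact by exhibiting it as a norm-limit of the finite-rank truncations $\sum_{k\le N} \mu_k^{-1}(\cdot,\vp_k)B\vp_k$ (same Cauchy--Schwarz estimate you use), and then invokes Keldysh's lemma on relatively compact perturbations to conclude discreteness of the spectrum of $T+B$. You instead establish the stronger statement that $B(T-\la)^{-1}$ is Hilbert--Schmidt, observe by dominated convergence that its norm tends to $0$ as $\la \to \infty$ along the imaginary axis, and then carry out the Neumann-series factorization $A - it_0 = (I + B(T-it_0)^{-1})(T-it_0)$ explicitly. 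In effect you have unpacked the proof of Keldysh's lemma rather than cited it, which makes the argument self-contained, gives a quantitative operator-norm bound as a by-product, and visibly locates a concrete point $it_0$ in the resolvent set of $A$. A further small advantage of your version is that it is stated directly for general $\alpha>0$ using $\mu_k \gtrsim k^{1/\alpha}$, whereas the paper's Lemma 2 proof is written only for $\alpha=1$ (with the remark that $\gamma<1$ forces $\beta<1/2$ in that case) and the general-$\alpha$ adjustments are deferred to Step 11. Your observation that the $\alpha$-non-condensing hypothesis is not needed here, only the growth bound \eqref{2}, is also accurate and matches the paper.

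One small cosmetic point: the inequality you write, $\|B(T-\la)^{-1}\|_{HS}^{2} = \sum_k \|B\vp_k\|^{2}/|\mu_k-\la|^{2}$, is an equality only after you know the operator is Hilbert--Schmidt; logically you first show the right-hand side is finite, which then proves the operator is Hilbert--Schmidt with that norm. You effectively do this (the closing remark about $\sum c_k B\vp_k$ converging), so this is only a matter of presentation, not substance.
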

\begin{proof}
Since $T$ is self-adjoint the following representation for the
resolvent holds
\begin{equation}\label{res}
(\lambda - T)^{-1} = \sum_{k=1}^\infty \frac{(\cdot ,
\vp_k)\vp_k}{\lambda - \mu_k}.
\end{equation}
Without loss of generality we have assumed that the point $\lambda
=0$ belongs to the resolvent set of $T$. Denoting $f_k = (f,
\vp_k)$  and taking into account that $\sum |f_k|^2 = \|f\|^2$ we
get
\begin{multline}\label{comp}
\left\|BT^{-1}f - \sum_{k=1}^N \frac{f_kB\vp_k}{\mu_k}\right\|=
\left\| \sum_{k=N+1}^\infty \frac{f_kB\vp_k}{\mu_k}\right\|
\leqslant
 \|f\|^2 \sum_{k=N+1}^\infty
\frac{\|B\vp_k\|^2}{\mu_k^2}\\ \leqslant b^2\|f\|^2
\sum_{k=N+1}^\infty \mu_k^{2\beta -2} \leqslant \ve \|f\|^2,
\end{multline}
where $\ve \to 0$  as $N\to\infty$. The latter assertion holds
since condition \eqref{2} with $\alpha =1$ implies $\mu_k \geq
C^{-1}k$, hence, the series $\sum \mu_k^{2\beta -2} \leqslant
C^{2-2\beta} \sum k^{2\beta -2}$  converges (here we use our
assumption \eqref{gamma} which implies $\beta <1/2$, provided that
$\alpha =1$). The estimate \eqref{comp} shows that $BT^{-1}$ is
compact, therefore $B$ is a relatively compact perturbation of
$T$. Then the discreteness of the spectrum of $T+B$ follows from
lemma of Keldysh (see, for example, \cite[Lemma 3.6]{Mar}).
\end{proof}

{\it Step 3.}
\begin{lem}\label{lem3}
Let   the spectrum $\{\mu_k\}_1^\infty$ of the operator $T$ form a
non-condensing sequence.  Equivalently, there is a number $l\in
\mathbb N$ such that
\begin{equation}\label{l}
n(t+0) - n(t-1) \leqslant l  \qquad \text{for all} \ \ t\geqslant
 1.
\end{equation}
Then there is a continuous piece-wise linear function $\psi(t)$
such that
$$
|n(t) - \psi (t)| \leqslant l
$$
and
$$
|\psi'(t)| \leqslant l.
$$
\end{lem}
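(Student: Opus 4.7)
My plan is to construct $\psi$ as the piecewise linear interpolant of $n$ at the integer nodes: set $\psi(k) = n(k)$ for every integer $k \geq 1$, and extend $\psi$ linearly on each interval $[k, k+1]$. This is the most natural choice because it directly converts a bound on the number of jumps of $n$ over a unit interval into a slope bound for $\psi$.

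For the slope estimate, on the open interval $(k, k+1)$ one has
$$
\psi'(t) = n(k+1) - n(k).
$$
Since the eigenvalue-counting function defined by \eqref{1} is left-continuous, this difference equals the number of $\mu_j$ lying in $[k, k+1)$, which is at most the number of $\mu_j$ in $[k, k+1]$. The non-condensing hypothesis \eqref{l}, applied at $t = k+1$, bounds the latter by $l$, giving $|\psi'(t)| \leq l$.

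For the sup-norm estimate, on the same interval $[k, k+1]$ monotonicity of $n$ forces $n(t) \in [n(k), n(k+1)]$, and the linear interpolant $\psi(t)$ lies in the same interval, so
$$
|n(t) - \psi(t)| \leq n(k+1) - n(k) \leq l,
$$
by the bound just used. The initial segment $t \in [1,\mu_1)$, where $n(t) = 0$, is handled trivially by setting $\psi \equiv 0$ there, which is consistent with $\psi(k) = n(k)$ at the first few integer nodes and satisfies both estimates.

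No real obstacle is anticipated: the lemma is essentially the observation that the non-condensing condition is equivalent to a Lipschitz bound on the interpolant with the same constant $l$, and both assertions drop out of this single calculation. The only care needed is to use the correct left/right-continuity conventions in \eqref{1} and in \eqref{l} so that the counting in $[k,k+1)$ is dominated by the counting in $[k,k+1]$, which is where the hypothesis is directly applicable.
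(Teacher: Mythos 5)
Your construction — piecewise linear interpolation of $n$ at the integer nodes, with the slope and sup-norm bounds both falling out of the fact that the non-condensing hypothesis caps the number of eigenvalues in any unit interval at $l$ — is exactly the paper's proof (the paper uses the nodes $s_m = n(m+0)$ rather than $n(m)$, an immaterial choice). Both the approach and the key estimate are the same, so the proposal is correct.
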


\begin{proof}
Without loss of generality  we have already assumed  that
$\mu_1>1$. Define the integers $s_m:=n(m+0)$. Then the segments
$\Delta_m=(m-1,\, m], \ m=1,2,\dots,$ contain $l_m=s_m
-s_{m-1}\leqslant l$ eigenvalues from the sequence
$\{\mu_k\}_{k=1}^\infty$.
 Now define
the function $\psi(t)$ on the interval $\Delta_{m+1} =(m,\, m+1]$
as follows
$$
\psi(t) = s_m +l_m (t-m), \qquad t\in (m,\, m+1].
$$
It follows from the construction that $|\psi(t)- n(t)| \leqslant
\sup\{l_m\} =l$ and $\psi'(t) \leqslant l$. The lemma is proved.
\end{proof}

{\it Step 4.} Let us prove the following lemma.
\begin{lem}\label{lem4}
 Let $a$  be a fixed positive number and suppose that  the interval
  $(r-2ar^{2\beta}, r+2ar^{2\beta})$  does not contain  the points  $\mu_k$
  of the spectrum of the operator $T$. Assume also that the
  constant $b$ participating in condition \eqref{3} is  such that
\begin{equation}\label{b<}
a\geqslant 48\, l\, b^2.
\end{equation}
Then  the following estimate
is valid  in the strip $r-ar^{2\beta}\leqslant \Re\la\leqslant
r+ar^{2\beta}:$
\begin{equation}\label{1/2}
\sum_{k=1}^\infty \frac{\|B\vp_k\|^2}{|\la-\mu_k|^2}<\frac 14,
\end{equation}
provided that $r\geqslant C$ where $C=C(a,\beta)$ depends only on
$a$ and $\beta$ (hence, only on $l, b$ and $\beta$).
\end{lem}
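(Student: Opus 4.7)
The plan is to majorize the sum in \eqref{1/2} by a Stieltjes integral and then exploit the forced gap $|\Re\la - \mu_k| \ge \rho := a r^{2\beta}$, which follows from the hypothesis (since $\Re\la \in [r - \rho, r + \rho]$ while $(r - 2\rho, r + 2\rho)$ contains no $\mu_k$). Writing $x := \Re\la$ and using $|\la - \mu_k|^2 \ge (x - \mu_k)^2$ together with the subordinate estimate \eqref{3}, the task reduces to bounding
$$
S := b^2 \sum_{k = 1}^\infty \frac{\mu_k^{2\beta}}{(x - \mu_k)^2} = b^2 \int_{|t - x| \ge \rho} \frac{t^{2\beta}}{(x - t)^2}\, dn(t, T).
$$
Lemma \ref{lem3} then allows one to dominate $dn(t, T)$ by $l\, dt$ modulo boundary corrections of lower order.

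I would split the integral into three natural ranges: (i) the near zone $|t - x| \in [\rho, x/2]$, (ii) the left tail $t \le x/2$, and (iii) the right tail $t \ge 3x/2$. In the near zone $t \sim x \sim r$, so $t^{2\beta} \le C_\beta\, r^{2\beta}$ (taking $r$ large enough that $x \le 2r$); combined with the elementary estimate $\int_{\rho \le |u| \le x/2} u^{-2}\, du \le 2/\rho$, this yields the dominant bound of order
$$
\frac{l b^2 r^{2\beta}}{\rho} \;=\; \frac{l b^2}{a}.
$$
The hypothesis $a \ge 48\, l\, b^2$ is designed precisely so that, with the constants tracked carefully, this contribution is strictly less than $1/8$. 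In the two tails, the crude bounds $(x - t)^2 \ge x^2/4$ and $(x - t)^2 \ge t^2/9$ combine with the integrability of $t^{2\beta - 2}$ at infinity (guaranteed by $\beta < 1/2$, i.e., the case $\alpha = 1$ of \eqref{gamma}) to produce contributions of order $r^{2\beta - 1} = o(1)$; once $r$ exceeds a constant $C(a, \beta, l, b)$, they jointly contribute less than $1/8$.

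The main technical obstacle is sharp bookkeeping of constants in the near zone: one must verify that the factor $C_\beta$ arising from $t^{2\beta} \le C_\beta r^{2\beta}$ is benign (one may take $C_\beta = (3/2)^{2\beta} \le 3$ for $\beta < 1/2$) and that the boundary corrections to the replacement $dn(t, T) \to l\, dt$ (the additive term $2l$ in Lemma \ref{lem3}) produce only lower-order terms. Adding the three contributions then yields the strict inequality $S < 1/4$, completing the proof.
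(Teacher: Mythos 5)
Your overall plan — pass to the Stieltjes integral $b^2\int t^{2\beta}\,dn(t)/|\la-t|^2$, invoke Lemma~\ref{lem3} to compare $dn(t)$ with $l\,dt$, then split the domain at distance $\rho=ar^{2\beta}$ from $\Re\la$ and use integrability of $t^{2\beta-2}$ in the tails — is precisely the paper's route. But there is a genuine gap in the middle step, and it is not a matter of "sharp bookkeeping": you treat the remainder $\zeta(t)=n(t)-\psi(t)$ from Lemma~\ref{lem3} as producing only "boundary corrections of lower order," and this is false. Replacing $dn$ by $\psi'\,dt\le l\,dt$ discards the signed measure $d\zeta$; one must integrate the term $\int f\,d\zeta$ by parts (as the paper does in \eqref{first}), and the resulting $\int |f'|\,|\zeta|$ is \emph{not} lower order. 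In the near zone, $\int |f'|\,dt\approx \sup f \approx r^{2\beta}/\rho^2$, while $\int f\,dt\approx r^{2\beta}/\rho$; since the lemma only guarantees $\rho=ar^{2\beta}\ge 1$ (not $\rho\to\infty$ — e.g.\ $\beta=0$), the ratio $1/\rho$ is bounded but not small, so the $\zeta$-contribution is of the same order as the $\psi$-contribution.

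The paper handles this carefully: after integration by parts, the $\zeta$-part is dominated pointwise by $3l\cdot t^{2\beta}/\bigl[(\sigma-t)^2+\tau^2\bigr]$ (see \eqref{sec}, using $2\beta t^{2\beta-1}\le t^{2\beta}$ and $\sqrt{(\sigma-t)^2+\tau^2}\ge\rho\ge 1$), so the total prefactor in front of the common integral is $4b^2l$, not $b^2l$. The common integral is then shown to be $<3/a$ (\eqref{third}), yielding $12lb^2/a\le 1/4$ — and the constant $48$ in \eqref{b<} is chosen exactly to make this work. If you keep only the $\psi$-contribution (your $\frac{lb^2}{a}\cdot C_\beta\cdot 2$) you appear to land comfortably under $1/8$, but silently dropping the factor $4$ coming from the $\zeta$-part invalidates the arithmetic that the hypothesis $a\ge 48lb^2$ is designed to close. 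So the missing idea is the integration by parts and the explicit estimate of $|f'|$; "lower order" is the wrong assessment. Once that is added, your three-range decomposition is a harmless cosmetic variation of the paper's split at $\xi=\sigma$.
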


\begin{proof}
Denote $\lambda = \Re\lambda +i \Im\lambda: =\sigma +i\tau$ and
$r^- = r-2ar^{2\beta}$,  $r^+ = r+2ar^{2\beta}$.  We have to
estimate the sum for the values $\sigma\in(r-r^{2\beta}, r+
r^{2\beta})$ and  $\tau \in \mathbb R$.  Recall that according to
Lemma~\ref{lem3} we have the representation
$$
n(t) =\psi(t) +\zeta(t), \qquad \text{where}  \ \,
|\psi'(t)|\leqslant l, \ \, |\zeta(t)| \leqslant l,   \ \, n(t)
=n(t,\, T).
$$
 Using condition \eqref{3} we obtain
\begin{multline}\label{first}
\sum_{k=1}^\infty\frac{\|B\vp_k\|^2}{| \la-\mu_k|^2} \leqslant
 b^2\sum_{k=1}^\infty\frac{\mu_k^{2\beta}}{(\sigma-\mu_k)^2
 +\tau^2} =
\\  b^2\int_1^\infty \frac{t^{2\beta}\, d n(t)}{(\sigma-t)^2
 +\tau^2}\leqslant
b^2l \left( \int_1^{r^-} +\int_{r^+}^\infty\right)
\frac{t^{2\beta}\, dt}{(\sigma -t)^2 +\tau^2} + \\
b^2 \left( \int_1^{r^-} + \int_{r^+}^\infty\right)
\frac{\left(2\beta t^{2\beta - 1} [(\sigma -t)^2 +\tau^2]
+2t^{2\beta}|\sigma-t|\right)|\zeta(t)|\, dt}{[(\sigma -t)^2
+\tau^2]^2}.
\end{multline}
Remark that we have integrated by parts while transforming here
the integrals. Taking into account the inequalities
$$
|\sigma -t|/ \sqrt{(\sigma-t)^2 +\tau^2} \leqslant 1, \quad 2\beta
<1, \quad |\zeta(\xi)| \leqslant l,
$$
we estimate the last sum of the integrals as follows
\begin{multline}\label{sec}
\leqslant l \left( \int_1^{r^-} + \int_{r^+}^\infty\right)
\left[\frac{2\beta t^{2\beta - 1}}{(\sigma -t)^2 +\tau^2} +
\frac{2t^{2\beta}}{\left((\sigma -t)^2
+\tau\right)^{3/2}}\right]\, dt \leqslant \\ 3l\left( \int_1^{r^-}
+ \int_{r^+}^\infty\right) \frac{ t^{2\beta}\, dt}{(\sigma -t)^2
+\tau^2} \leqslant  3l\left( \int_1^{r^-} +
\int_{r^+}^\infty\right) \frac{ t^{2\beta}\, dt}{(\sigma -t)^2},
\end{multline}
provided that $\min [(\sigma-r^-), (r^+-\sigma)] =
ar^{2\beta}\geqslant 1.$ Therefore, to prove lemma it is
sufficient to estimate the integral
\begin{multline}\label{third}
\left( \int_1^{r^-} + \int_{r^+}^\infty\right) \frac{ t^{2\beta}\,
dt}{(\sigma -t)^2} \leqslant \int_{\sigma -r^-}^\sigma
\frac{(\sigma-\xi)^{2\beta}}{\xi^2}\, d\xi + \int_{r^+
-\sigma}^\infty\frac{(\xi+\sigma)^{2\beta}}{\xi^2}\, d\xi\leqslant
\\
 \sigma^{2\beta} \int_{ar^{2\beta}}^\infty\frac 1{\xi^2} \, d\xi +
\int_{ar^{2\beta}}^\sigma \frac {(2\sigma)^{2\beta}}{\xi^2} \,
d\xi +\int_\sigma^\infty\frac {(2\xi)^{2\beta}}{\xi^2} \, d\xi <
\frac{(1+2^{2\beta}) \sigma^{2\beta}}{ar^{2\beta}}
+\frac{2^{2\beta}}{(1-2^{2\beta})}\, \sigma^{2\beta -1} <\\
\frac{1+2^{2\beta}}a\, \left(1+ar^{2\beta -1}\right)^{2\beta}
+\frac{2^{2\beta}}{(1-2\beta)}\,
\left(r-ar^{2\beta}\right)^{2\beta-1}< \frac 3a
-\frac{\left(2-2^{2\beta}\right)}a +Cr^{2\beta-1} < \frac 3a,
\end{multline}
provided that $r$ is sufficiently  large, i.e. $r^{1-2\beta}
\geqslant Ca/(2-2^{2\beta})$ where $C$ depends only on $\beta$ and
$a$. Now, the assertion of lemma  straightly  follows from
\eqref{first}, \eqref{sec} and \eqref{third}.  The Lemma is
proved.
\end{proof}

{\it Step 5.} Here we estimate the left hand-side of \eqref{1/2}
outside the parabolic domain defined as follows
\begin{equation}\label{Par}
P(h, 2\beta) = \{\la:  |Im\lambda|\leqslant h(Re\lambda
)^{2\beta}, Re\lambda \geqslant 0\}
\end{equation}

\begin{lem}\label{lem5}
Let the conditions of Theorem 1 hold.
Let $h$ be a positive number and
$$
\sigma_h = \left[ \frac{2h}{\pi(1-2\beta)(2^{1-2\beta}
-1)}\right]^{1/(1-2\beta)}.
$$
Then for all $\lambda =\sigma+i\tau$ lying in the half-plane
$\sigma \geqslant \sigma_h$     and  outside the parabola $P(h,\,
2\beta)$ defined by \eqref{Par} the following estimate holds
\begin{equation}\label{out}
\sum_{k=1}^\infty \frac{\|B\vp_k\|^2}{|\la-\mu_k|^2}<\frac{6\pi
b^2 l}{h}.
\end{equation}
\end{lem}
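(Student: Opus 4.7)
My approach would closely parallel Lemma~\ref{lem4}: start from the resolvent representation, invoke the decomposition $n(t)=\psi(t)+\zeta(t)$ provided by Lemma~\ref{lem3}, and reduce the sum to a single Lebesgue integral. The decisive difference is that the smallness of $|\lambda-\mu_k|$ is now controlled by the imaginary part $\tau$ (which, outside the parabola $P(h,2\beta)$, satisfies $|\tau|\geqslant h\sigma^{2\beta}$) rather than by an artificial real--axis lacuna around $\sigma$.

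Concretely, using condition \eqref{3} one first bounds
$$
\sum_{k=1}^\infty \frac{\|B\varphi_k\|^2}{|\lambda-\mu_k|^2}\leqslant b^2\int_1^\infty \frac{t^{2\beta}\,dn(t)}{(\sigma-t)^2+\tau^2},
$$
then substitutes $n(t)=\psi(t)+\zeta(t)$ and integrates by parts in the $\zeta$-piece, exactly as in the passage from \eqref{first} to \eqref{sec}. Using $|\psi'|\leqslant l$, $|\zeta|\leqslant l$ and the inequality $|\sigma-t|/\sqrt{(\sigma-t)^2+\tau^2}\leqslant 1$, the estimate reduces---up to a fixed multiplicative constant depending only on $l$---to bounding
$$
I(\sigma,\tau)=\int_1^\infty \frac{t^{2\beta}\,dt}{(\sigma-t)^2+\tau^2}.
$$

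To control $I(\sigma,\tau)$ I would split the integration at $t=\sigma/2$ and $t=2\sigma$. On the middle interval $[\sigma/2,\,2\sigma]$ I would bound $t^{2\beta}\leqslant 2^{2\beta}\sigma^{2\beta}$ and use $\int_{-\infty}^{\infty} du/(u^2+\tau^2)=\pi/|\tau|$; combined with $|\tau|\geqslant h\sigma^{2\beta}$ this produces a contribution of order $\pi/h$. On the two outer intervals the term $(\sigma-t)^2$ dominates the denominator and the $\tau^2$ can be discarded; an elementary computation then yields contributions of order $\sigma^{2\beta-1}/(1-2\beta)$. The explicit value of $\sigma_h$ in the statement is calibrated precisely so that for $\sigma\geqslant\sigma_h$ these two outer contributions are themselves absorbed into a term of order $\pi/h$. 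Summing the three pieces and tracking the numerical constants gives the stated bound $6\pi b^2 l/h$.

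The sole obstacle is arithmetic bookkeeping: the key conceptual point is that $\tau^2$ must be \emph{retained} on the central interval (whereas Lemma~\ref{lem4} discarded it, relying instead on the artificial lacuna) and \emph{discarded} on the tails. Following the numerical factors ($2^{2\beta}$, $(1-2\beta)^{-1}$, and $2^{1-2\beta}-1$) carefully through this split is what matches the answer to the precise constant $6\pi$ in the statement of the lemma; no new analytic idea beyond those already used in Lemmas~\ref{lem3} and~\ref{lem4} is required.
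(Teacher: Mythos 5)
Your overall plan---reduce to the integral $\int_1^\infty t^{2\beta}\,dt/\bigl((\sigma-t)^2+\tau^2\bigr)$ via Lemma~\ref{lem3}, keep the $\tau^2$ in the denominator on a central block where the parabola condition $|\tau|\geqslant h\sigma^{2\beta}$ applies, discard it on the tails, and then absorb the tails using $\sigma\geqslant\sigma_h$---is exactly the strategy of the paper. But your specific split at $t=\sigma/2$ and $t=2\sigma$ with the uniform bound $t^{2\beta}\leqslant (2\sigma)^{2\beta}$ on the central block does \emph{not} deliver the stated constant, and this is not mere bookkeeping. The central block alone then gives
$$
(2\sigma)^{2\beta}\int_{\sigma/2}^{2\sigma}\frac{dt}{(\sigma-t)^2+\tau^2}\leqslant \frac{2^{2\beta}\pi}{h},
$$
and after multiplying by the factor $4b^2l$ inherited from the passage \eqref{first}--\eqref{sec} (the paper records this in \eqref{gen}), the central contribution is already $4\cdot 2^{2\beta}\pi b^2l/h$, which exceeds $6\pi b^2l/h$ whenever $2^{2\beta}>3/2$, i.e.\ for $\beta>\tfrac{1}{2}\log_2\tfrac{3}{2}\approx 0.29$---well inside the admissible range $\beta<1/2$. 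Since the two outer tails only \emph{add} to the bound, no choice of $\sigma_h$ can compensate; the claimed $6\pi$ cannot be reached with your split.

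The paper sidesteps this by not taking a symmetric uniform bound on a single central block. After the substitution it writes
$$
\int_1^\infty\frac{t^{2\beta}\,dt}{(\sigma-t)^2+\tau^2}
\leqslant \int_0^\sigma\frac{(\sigma-\xi)^{2\beta}}{\xi^2+\tau^2}\,d\xi
+\int_0^\sigma\frac{(\sigma+\xi)^{2\beta}}{\xi^2+\tau^2}\,d\xi
+\int_\sigma^\infty\frac{(\sigma+\xi)^{2\beta}}{\xi^2+\tau^2}\,d\xi,
$$
bounds the first numerator by $\sigma^{2\beta}$ and the second by $(2\sigma)^{2\beta}$, and uses the half-line integral $\int_0^\infty d\xi/(\xi^2+\tau^2)=\pi/(2\tau)$ for each of the first two pieces. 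Outside the parabola this gives $(1+2^{2\beta})\pi/(2h)\leqslant 3\pi/(2h)$ for the ``central'' contribution, and the third piece contributes $\frac{2^{2\beta}}{1-2\beta}\sigma^{2\beta-1}$, which is absorbed into the slack $3-(1+2^{2\beta})=2-2^{2\beta}$ precisely for $\sigma\geqslant\sigma_h$. To rescue your version you would have to bound $t^{2\beta}$ by $\sigma^{2\beta}$ on the left of $\sigma$ and by $(2\sigma)^{2\beta}$ only on the right, using $\pi/(2\tau)$ for each half---but that is exactly the paper's computation.
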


\begin{proof}
Repeating the arguments in proving Lemma \ref{lem4} (see estimates
\eqref{first} and \eqref{sec}) we get
\begin{equation} \label{gen}
\sum_{k=1}^\infty \frac{\|B\vp_k\|^2}{|\la-\mu_k|^2}< 4b^2 l
\int_1^\infty \frac{t^{2\beta}\, dt}{(\sigma -t)^2 + \tau^2}.
\end{equation}
Further we proceed
\begin{multline*}
\int_1^\infty \frac{t^{2\beta}\, dt}{(\sigma -t)^2 + \tau^2} <
\int_0^\sigma \frac{(\sigma -\xi)^{2\beta}}{\xi^2+\tau^2}\, d\xi +
\left(\int_0^\sigma +\int_\sigma^\infty\right) \frac{(\sigma
+\xi)^{2\beta}}{\xi^2+\tau^2}\, d\xi<
\\
\sigma^{2\beta}\frac\pi{2\tau} +(2\sigma)^{2\beta}\frac\pi{2\tau}
+\int_\sigma^\infty\frac{(2\xi)^{2\beta}}{\xi^2} \, d\xi.
\end{multline*}
Outside the parabola $P_h$ we have $\tau > h \sigma^{2\beta}$.
Therefore, for $\lambda= \sigma+i\tau \notin P_h$ the right
hand-side of the last inequality can be estimated as follows
$$
< \frac{3\pi}{2h} -\frac{(2-2^{2\beta})\pi}{2h}
+\frac{2^{2\beta}}{(1-2\beta)}\ \sigma^{2\beta-1} \leqslant
\frac{3\pi}{2h},
$$
provided that $\sigma \geqslant \sigma_h$. This ends the proof of
lemma.
\end{proof}

{\it Step 6.}

\begin{lem}\label{lem6}
Let conditions of Lemma \ref{lem4} hold. Then there is a large
number $R=R(r)$ such that the estimate \eqref{1/2} holds on the
boundary of the rectangle $\mathcal R_r$ whose horisontal sides
are the segments of the lines $\Im\lambda =\pm R$ and the
vertical sides are the segments of the lines $\Re\lambda =-R$ and
$\Re\lambda =r$.
\end{lem}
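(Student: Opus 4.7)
The plan is to bound the sum on each of the four sides of $\mathcal{R}_r$ separately, using Lemma \ref{lem4} on the right side $\Re\lambda = r$ (which lies in the narrow vertical strip already treated there) and Lemma \ref{lem5} together with a direct integral estimate on the three remaining sides. First I would fix $h$ large enough so that the conclusion $6\pi b^2 l/h$ of Lemma \ref{lem5} is already $<1/4$; e.g.\ $h = 30\pi b^2 l$. Since Lemma \ref{lem4} applies on the entire vertical segment $\{\Re\lambda = r,\ |\Im\lambda| \le R\}$ for any $R$, that side is immediate.

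Next I would choose $R = R(r)$ large enough to satisfy both $R \ge \sigma_h$ and $R \ge h\, r^{2\beta}$. Consider the top horizontal side $\{\lambda = \sigma + iR,\ -R \le \sigma \le r\}$ (the bottom one is symmetric). For $\sigma \in [\sigma_h,\, r]$ the inequality $|\Im\lambda| = R \ge h\, r^{2\beta} \ge h\, \sigma^{2\beta}$ places $\lambda$ outside the parabola $P(h,2\beta)$, so Lemma \ref{lem5} delivers the bound $<1/4$. For the remaining part of the horizontal side, $\sigma \in [-R,\,\sigma_h]$, I would use the integral estimate derived inside the proof of Lemma \ref{lem5},
$$
\sum_{k=1}^\infty \frac{\|B\varphi_k\|^2}{|\lambda-\mu_k|^2}\ \le\ 4b^2 l \int_1^\infty \frac{t^{2\beta}\,dt}{(\sigma-t)^2 + R^2},
$$
and show that after splitting the range of $t$ at $t = 2\sigma_h$ (so that $|\sigma - t| \ge t/2$ on the second piece) the integral is $O(R^{2\beta - 1}) \to 0$ as $R\to\infty$, uniformly in $\sigma \in [-R,\sigma_h]$. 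For the left vertical side $\{\Re\lambda = -R,\ |\Im\lambda|\le R\}$, the simpler bound $|\lambda-\mu_k|^2 \ge (\mu_k + R)^2$ (available because $\mu_k \ge 1$) reduces the sum to $b^2 \sum_k \mu_k^{2\beta}/(\mu_k+R)^2$, and a comparison with the integral $\int_1^\infty t^{2\beta}/(t+R)^2\,dt = O(R^{2\beta -1})$ together with $\beta < 1/2$ shows that this also tends to $0$. Hence enlarging $R$ further if necessary we force each of these contributions below $1/4$.

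The only real obstacle is ensuring uniformity along the left side, where the real part $\sigma = -R$ itself tends to $-\infty$ as $R$ grows: one must verify that the factor $(\mu_k + R)^{-2}$ really dominates the growth $\mu_k^{2\beta}$ uniformly. This reduces to the convergence of $\int_0^\infty s^{2\beta}/(1+s)^2\,ds$ after the substitution $t = Rs$, and hinges on $\beta < 1/2$, which is guaranteed by the standing hypothesis \eqref{gamma} in the case $\alpha = 1$. Everything else is a routine comparison of the Cauchy-type sum with its integral majorant, already carried out in Lemmas \ref{lem4} and \ref{lem5}.
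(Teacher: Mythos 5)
Your proposal is correct and follows essentially the same route as the paper's own (very brief) proof: Lemma~\ref{lem4} handles the side $\Re\lambda = r$, and the integral bound \eqref{gen} from the proof of Lemma~\ref{lem5} shows the sum is $O(|\Im\lambda|^{-1})$ (and $O(R^{2\beta-1})$ on the left side) for fixed $r$ as $R\to\infty$. You do fill in one step the paper glosses over — the portion of the horizontal sides with $\sigma\in[\sigma_h, r]$, where you invoke the outside-parabola estimate of Lemma~\ref{lem5} after fixing $h$ so that $6\pi b^2 l/h < 1/4$ — but the underlying mechanism is identical.
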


\begin{proof}
The validity of estimate \eqref{1/2} on the line $\Re\lambda =r$
is proved in  Lemma \ref{lem4}. It follows from the proof of
 Lemma \ref{lem5} that the left hand-side of \eqref{1/2} obeys the estimate
$\leqslant C(\Im\lambda)^{-1}$ as $\Im\lambda\to\infty$ uniformly
in the half-plane $\Re\lambda <\sigma_h$. We do not show here
details because they are obviously seen from the proof of  Lemma
\ref{lem5}. Finally, the estimate on the line $\Re\lambda = -R$
with sufficiently large $R$ also follows easily from the above
representations.
\end{proof}

{\it Step 7.} Our goal at this step is to show the equality $n(r,
T_r + B) = n(r, T_r)$, where $T_r$ is a special finite-dimensional
"correction" of the unperturbed operator $T$. First we construct
the operator $T_r$.

Take a positive $a$ such that the inequality
\begin{equation}\label{2b<}
a\geqslant 96\,  b^2\, l
\end{equation}
holds. We pay attention  that this condition differs from
\eqref{b<} by changing $l$ to $2l$ (we will use this further). Fix
a number $r$ such that $r-2ar^{2\beta} >1.$  Define the interval
$\Delta_r = (r-2ar^{2\beta},\,  r+2ar^{2\beta})$ and the operator
$$
K_r = 4ar^{2\beta}  \sum_{\mu_k\in\Delta_r} (\cdot,\,
\varphi_k)\varphi_k.
$$
Obviously, $K_r$ is a self-adjoint operator of finite rank not
exceeding the value
\begin{equation}\label{N}
N= n(r+2ar^{2\beta},\, T) -  n(r-2ar^{2\beta},\, T)
\end{equation}
Now define $T_r= T+K_r$.  Obviously, this operator preserves the
system of eigenfunctions
 $\{\vp_k\}_1^\infty$ but changes the eigenvalues lying in the interval
 $\Delta_r=(r-2ar{2\beta},\, r+2ar^{2\beta})$; it shifts them by
$4ar^{2\beta}$ to the  right from this interval. The condition
$T_r\geqslant 1$  is preserved, since $K_r$ is non-negative. The
sequence of the eigenvalues $\{\mu'_k\}_{k=1}^\infty$ of the
operator $T_r$ remains non-condensing  but we have to take into
account that the number $l = \sup_{t>0} \left(\sum_{\mu_k\in[t,
t+1)} 1\right)$ is changed to 2l. Then, by construction $\mu'_k
\geqslant \mu_k$, therefore condition \eqref{3} is preserved for
$T_r$.

Let us estimate the norm of the operator function $B(\lambda
-T_r)^{-1}$ in the strip $\Re\lambda\in (r-ar^{2\beta},\,
r+ar^{2\beta}).$  We apply the method used by Adduci and Mityagin
\cite[\S 4]{AM}  or \cite{AM1}. Let  $f\in \mathcal H, \|f\|=1 $,
and $f_k=(f,\vp_k)$ be the Fuorier  coefficients of the element
$f$. Then
\begin{equation}\label{AM}
\|B(\la- T_r)^{-1}f\|^2\  =\ \left\|\sum_{k=1}^\infty \ \frac {f_k
B\vp_k}{\la-\mu'_k}\right\|^2\ \leqslant\ \sum_{k=1}^\infty \
|f_k|^2 \sum_{k=1}^\infty\ \frac{\|B\vp_k\|^2}{|\la-\mu'_k|^2}\ =
\ \sum_{k=1}^\infty \ \frac{\|B\vp_k\|^2}{|\la-\mu'_k|^2}.
\end{equation}
Applying  Lemma \ref{lem4} and taking into account that the number
$a$ is selected by \eqref{2b<} instead of \eqref{b<} (because the
number $l$ for $T_r$ has to be changed by $2l$), we obtain  the
following estimate
\begin{equation}\label{1/4}
\|B(\la-T_r)^{-1}\| <\sqrt\frac 14 =\frac 12, \quad \ \ \Re\la \in
(r-ar^{2\beta}, r+ar^{2\beta}).
\end{equation}
Now, by virtue of  Lemma \ref{lem6} take a number $ R = R(r)$ such
that estimate \eqref{1/4} holds on the boundary of the rectangle
$\mathcal R$. Then, for all $t\in [0,1]$ the Riesz projectors
$$
Q_t =\frac 1{2\pi i}\int_{\partial\Omega} (\la- T_r-tB)^{-1}\,
d\la \ = \ \frac 1{2\pi i}\int_{\partial\Omega} (\la- T_r)^{-1}
(1-tB(\la-T_r)^{-1}\, d\la
$$
are well defined and depend continuously on $t$ in the norm
operator topology. By virtue of Sz\" okefalvi-Nagy's lemma (see
\cite[Ch. 1, Lemma 3.1]{GK}) $\dim Q_t =\dim
 Q_\xi$, provided that $\|Q_t- Q_\xi\|<1$. Therefore, it follows from
 the continuity of $Q_t$ that
 \begin{equation}\label{nT}
n(r, T_r) = \dim P_0 = \dim P_1 = n(r, T_r +B).
\end{equation}
This proves lemma.

 {\it  Step 8.} Consider the scalar function
$$
D(\la) = \text{det}(1-K_r(\la- T_r-B)^{-1}).
$$

By virtue of Lemma \ref{lem2}  the spectrum of the operator
$T_r+B$ is discrete. Hence, the operator function  $K(\la): =
K_r(\la-T_r-B)^{-1}$ is meromorphic and its values  are finite
rank operators  for  $\la\notin\sigma (T_r+B)$. Therefore, the
determinant $D(\la)$  is well defined ( as a meromorphic function)
and is equal to the product $\prod_j (1-\la_j(K))$, where
$\la_j(K)$ are the eigenvalues of the operator $K(\la)$. Since
$\dim K_r\leqslant 4ap$,  this product contains at most
$4ar^{2\beta}$  factors.

\begin{lem}\label{lem7}
In the strip $\Re\la\in(r-a r^{2\beta},\, r+ar^{2\beta})$ the
function $D(\la)$ is holomorphic  and is estimated as
\begin{equation}\label{D+}
|D(\la)|\leqslant 9^{N},
\end{equation}
where the number $N$ is defined by \eqref{N}. At the point
$\lambda = r+ihr^{2\beta}$ the following lower estimate holds
\begin{equation}\label{D-}
|D(\la)|\geqslant \left(\frac 12\right)^{N}, \quad \text{provided
that} \ \, h\geqslant 16\, a.
\end{equation}
\end{lem}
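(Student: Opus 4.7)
The plan is to reduce both estimates to a norm bound on the finite-rank operator $K(\lambda):=K_r(\lambda-T_r-B)^{-1}$, whose nonzero eigenvalues furnish the factors in $D(\lambda)=\prod_j(1-\lambda_j(K(\lambda)))$. The key observation is the factorization
$$
K(\lambda)=K_r(\lambda-T_r)^{-1}\bigl(I-B(\lambda-T_r)^{-1}\bigr)^{-1},
$$
which separates the resonant part $K_r(\lambda-T_r)^{-1}$ (controlled by geometry, since $K_r$ and $T_r$ share the eigenbasis $\{\varphi_k\}$) from a harmless factor (controlled by the bound \eqref{1/4} already established).

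First, I would invoke \eqref{1/4} to ensure $\|B(\lambda-T_r)^{-1}\|<1/2$ throughout the strip $\Re\lambda\in[r-ar^{2\beta},r+ar^{2\beta}]$, so that $(I-B(\lambda-T_r)^{-1})^{-1}$ exists there with operator norm at most $2$. Since by construction $T_r$ has no spectrum in the open interval $(r-2ar^{2\beta},r+2ar^{2\beta})$, the strip avoids $\sigma(T_r)$ entirely, and the displayed factorization shows that $(\lambda-T_r-B)^{-1}$, hence $K(\lambda)$ and $D(\lambda)$, are holomorphic on the strip.

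Next, I would compute $K_r(\lambda-T_r)^{-1}$ directly using the common eigenbasis: it acts as multiplication by $4ar^{2\beta}/(\lambda-\mu'_k)$ on $\varphi_k$ with $\mu_k\in\Delta_r$ and by zero otherwise. The shifted eigenvalues $\mu'_k=\mu_k+4ar^{2\beta}$ lie in the interval $(r+2ar^{2\beta},r+6ar^{2\beta})$, so for $\lambda$ in the strip one has $\Re(\mu'_k-\lambda)>ar^{2\beta}$ and therefore $\|K_r(\lambda-T_r)^{-1}\|<4$. Combining, $\|K(\lambda)\|<8$ on the strip. Since $\rank K(\lambda)\leqslant \rank K_r\leqslant N$, the determinant has at most $N$ nontrivial factors, each satisfying $|1-\lambda_j|\leqslant 1+\|K(\lambda)\|\leqslant 9$, which yields the upper bound \eqref{D+}.

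For the lower bound at $\lambda_0=r+ihr^{2\beta}$, the same diagonal computation gives $|\lambda_0-\mu'_k|\geqslant|\Im\lambda_0|=hr^{2\beta}$ for $\mu_k\in\Delta_r$, so each diagonal entry of $K_r(\lambda_0-T_r)^{-1}$ has modulus at most $4a/h$. The hypothesis $h\geqslant 16a$ then forces $\|K_r(\lambda_0-T_r)^{-1}\|\leqslant 1/4$, and after multiplying by the factor of at most $2$ coming from $(I-B(\lambda_0-T_r)^{-1})^{-1}$ we obtain $\|K(\lambda_0)\|\leqslant 1/2$. Consequently every nonzero eigenvalue of $K(\lambda_0)$ satisfies $|1-\lambda_j|\geqslant 1/2$, and with at most $N$ such factors the product gives $|D(\lambda_0)|\geqslant(1/2)^N$, namely \eqref{D-}. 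There is no real obstacle beyond careful bookkeeping of the shifted eigenvalues; indeed, the whole point of the construction of $T_r$ was precisely to push the dangerous eigenvalues far enough from the test line $\Re\lambda=r$ that both the diagonal estimate and its improvement at $\lambda_0$ become elementary.
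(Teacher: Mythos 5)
Your proof is correct and follows essentially the same route as the paper: both factor $K(\lambda)=K_r(\lambda-T_r)^{-1}\bigl(I-B(\lambda-T_r)^{-1}\bigr)^{-1}$, control the second factor by the already-established bound $\|B(\lambda-T_r)^{-1}\|<1/2$, control the first factor by the artificial gap around $\Re\lambda=r$ (getting $<4$ in the strip, $\leqslant 4a/h\leqslant 1/4$ at $\lambda=r+ihr^{2\beta}$), and then bound each of the at most $N$ determinant factors $1-\lambda_j(K)$ from above by $9$ and from below by $1/2$. Your only cosmetic deviation is that you compute $K_r(\lambda-T_r)^{-1}$ explicitly on the common eigenbasis instead of bounding $\|K_r\|$ and $\|(\lambda-T_r)^{-1}\|$ separately, but these give the same numbers.
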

\begin{proof}
We shall use the identity
$$
(\la -T_r- B)^{-1} = (\la -T_r)^{-1}(1- B(\la-
T_r)^{-1})^{-1}
$$
and the estimates
$$
\|(\la- T_r)^{-1}\|\leqslant \frac 1{\text{dist} (\la, \sigma
(T_r))}\leqslant \frac 1{ar^{2\beta}}, \quad \|(1- B(\la -
T_r)^{-1})^{-1}\| \leqslant 2
$$
which hold for $\lambda$ in the strip $\Re\la\in (r-ar^{2\beta},\,
r+ar^{2\beta})$. The first estimate here  is valid because $T_r$
is self-adjoint, and the second one is proved in Lemma \ref{lem6}.
In particular, we find that  the operator function $K(\la)$ is
holomorphic in the strip $|\Re\la -r| < ar^{2\beta}$ and its
eigenvalues obey the inequalities
 $$
 |\la_j(K)|\leqslant \|K_r\|\  \|(\la- T_r- B)^{-1}\|
 \leqslant 4ar^{2\beta} \cdot \frac 1{ar^{2\beta}} \cdot 2 = 8.
 $$
 The number of the eigenvalues is equal to the rank of the operator $K(\la),$
 which does not exceed the rank of the operator $K_r$  equal to $N$.
Therefore, the product of $N$ factors of the form $(1-\la_j(K))$
is subject to estimate \eqref{D+}.

Next, by virtue of Lemma \ref{lem4} the estimate
$\|B(\la-T_r)^{-1}\|<1/2$ is valid for $\lambda$ on the line
$\Re\lambda =r$. Using the resolvent estimate for the self-adjoint
operator $T_r$ we get for $\lambda= r+ihr^{2\beta}$
\begin{equation}\label{la_j}
|\la_j(K(\lambda))|<\|K_r\| \ \|(\la- T_r)^{-1}\|\
\|(1-B(\la-T_r)^{-1})^{-1}\| \leqslant 4ar^{2\beta}\cdot \frac
1{r^{2\beta}\, \sqrt{1+h^2}}\, \cdot 2 \, < \frac{8a}h <1/2,
\end{equation}
provided that $h\geqslant 16 a$.  Then $|1-\lambda_j(K)|>1/2$.
Therefore, the product of $N$ factors of this form can be
estimated as follows
$$
|D(\la)|\geqslant \left(1-\frac 12\right)^{N} \geqslant
\left(\frac 12\right)^{N}, \qquad |\Im\la|\geqslant
hr^{2\beta}\geqslant 16\, a\, r^{2\beta}.
$$
The lemma is proved.
\end{proof}

{\it Step 9.} Now we are ready to prove the main lemma.
\begin{lem}\label{lem9}
Fix   numbers $a\geqslant 48\, b^2\, l$ and $h\geqslant 16 a$.
Take sufficiently large $r$ and consider a rectangle $\mathcal R$
defined in Lemma \ref{lem4} with $R> 2hr^{2\beta}$.  Suppose that
the line $\Re\lambda =r$ does not contain the eigenvalues of the
operator $A$.    Then the variation of the argument along the
boundary of the rectangle $\mathcal R$  is subject to the estimate
$$
|[\arg D(\la)]|_{\partial\mathcal R}| \leqslant CN +C_1,
$$
where $N$ is defined by \eqref{N} and $C, \ C_1$  are constants
depending only on $l$ and $b$.
\end{lem}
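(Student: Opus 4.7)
The plan is to split the closed boundary $\partial\mathcal R$ into the central segment $\gamma_0 = \{\la: \Re\la = r,\ |\Im\la|\leqslant 2hr^{2\beta}\}$ on the right edge and its complementary arc $\gamma_1$ (the remainder of the right edge together with the top, left, and bottom sides of $\mathcal R$), and to bound the change of $\arg D$ on each piece separately. On $\gamma_0$ the estimate comes from Lemma \ref{lem1}(ii) combined with the two-sided bound of Lemma \ref{lem7}, while on $\gamma_1$ we exploit the fact that $D(\la) = \prod_j(1 - \la_j(K(\la)))$ has at most $N$ nontrivial factors, each of which stays in the right half-plane once we ensure $\|K(\la)\|\leqslant 1/2$ there.

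For $\gamma_0$, apply Lemma \ref{lem1}(ii) with $\Pi = \{\la: |\Re\la - r| < ar^{2\beta},\ |\Im\la| < 4hr^{2\beta}\}$ and shrinkage parameter $\delta = 1/4$, so that $\Pi'$ contains both $\gamma_0$ and the auxiliary point $r + ihr^{2\beta}$. Since the bound $\|B(\la - T_r)^{-1}\| < 1/2$ from Lemma \ref{lem4} is valid throughout $\Pi$, the operator $\la - T_r - B$ is invertible on $\Pi$ and $D$ is holomorphic there. Lemma \ref{lem7} then supplies $M = \sup_\Pi |D| \leqslant 9^N$ and $M' \geqslant |D(r + ihr^{2\beta})| \geqslant 2^{-N}$, whence $\ln M - \ln M' \leqslant N\ln 18$. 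The aspect ratio $c/d = a/(4h) \leqslant 1/64$ is bounded independently of $r$ thanks to the assumption $h\geqslant 16a$, so Lemma \ref{lem1} yields an absolute constant $C_0$ for which $|[\arg D]_{\gamma_0}| \leqslant C_0 N$.

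For $\gamma_1$, the key reduction is to verify $\|K(\la)\| \leqslant 1/2$ uniformly on $\gamma_1$, where $K(\la) = K_r(\la - T_r - B)^{-1}$. On the two right-edge portions with $|\Im\la| > 2hr^{2\beta}$, the point $\la$ lies outside the parabolic domain $P(h, 2\beta)$, so Lemma \ref{lem5} yields $\|B(\la - T_r)^{-1}\| \leqslant 1/2$ (for $h$ sufficiently large), while the self-adjointness of $T_r$ gives $\|K_r(\la - T_r)^{-1}\| \leqslant \|K_r\|/|\Im\la| \leqslant 2a/h$; combining these via $(\la - T_r - B)^{-1} = (\la - T_r)^{-1}(1 - B(\la - T_r)^{-1})^{-1}$ gives $\|K(\la)\| \leqslant 4a/h \leqslant 1/4$. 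On the top, left, and bottom sides of $\mathcal R$, the same resolvent estimates (together with Lemma \ref{lem6}) show that choosing $R = R(r)$ sufficiently large makes $\|K(\la)\|$ arbitrarily small. Once $\|K(\la)\|\leqslant 1/2$ on $\gamma_1$, every eigenvalue $\la_j(K(\la))$ lies in the disk $|z|\leqslant 1/2$, so each factor $1-\la_j$ of $D$ stays within $\{|w-1|\leqslant 1/2\}\subset\{\Re w > 0\}$. A continuous branch of $\arg(1-\la_j)$ therefore remains in $(-\pi/2, \pi/2)$ and varies by at most $\pi$ as $\la$ traverses $\gamma_1$; summing over the (at most $N$) nontrivial factors yields $|[\arg D]_{\gamma_1}| \leqslant N\pi$.

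Adding the two contributions gives $|[\arg D]_{\partial\mathcal R}| \leqslant (C_0 + \pi)N + C_1 = CN + C_1$, with $C_1$ absorbing minor transitional constants. The principal technical obstacle is keeping the aspect ratio of the rectangle $\Pi$ in Part 1 bounded uniformly in $r$, which is precisely guaranteed by the parameter hierarchy $h \geqslant 16a \geqslant 768\,b^2 l$ dictated by Lemmas \ref{lem4} and \ref{lem7}; a secondary subtlety is splicing together the three geometric regimes (outside the parabola, along $|\Im\la| = R$, along $\Re\la = -R$) to obtain the uniform estimate $\|K(\la)\|\leqslant 1/2$ on the whole of $\gamma_1$.
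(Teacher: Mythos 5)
Your proof is correct and follows essentially the same route as the paper: the same split of $\partial\mathcal R$ into the central segment $I_r=[r-ihr^{2\beta},\,r+ihr^{2\beta}]$ and its complement, the same application of Lemma~\ref{lem1}(ii) with $M\leqslant 9^N$, $M'\geqslant 2^{-N}$ from Lemma~\ref{lem7} on the central segment, and the same small-eigenvalue argument ($|\la_j(K(\la))|$ small, so each factor $1-\la_j$ stays in a half-plane) on the remainder. If anything you are slightly more explicit than the paper about the geometry of the auxiliary rectangle (you correctly keep the $r^{2\beta}$ scaling in both $c$ and $d$, where the paper's ``$R_{a,h}$ bounded by $\Re\la=r\pm a$, $\Im\la=\pm 2h$'' reads like a typo), and you accept the slightly weaker per-factor bound $\pi$ instead of the paper's $\pi/3$, but neither affects the final $CN+C_1$ estimate.
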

\begin{proof} We have proved in the previous lemma estimate
\eqref{la_j} for all $\lambda \in \partial\mathcal R$ except the
segment on the line $\Re\lambda =r$ with the endpoints $r\pm
ihr^{2\beta}$. Hence, the variation of the argument of the
functions $(1-\lambda_j(K(\lambda))$ when $\lambda$ varies along
$\partial\mathcal R$ between these points outside this segment
does not exceed $\pi/3$. Then the variation  of the argument of
the function $D$ along this curve is $\leqslant \pi N/3.$

To complete the proof we have to estimate the variation of the
argument along the segment  $I_r=[r - ihr^{2\beta},\, r+
ihr^{2\beta}].$ For this purpose we shall use Lemma \ref{lem1}.
First, we chose a number $a$ satisfying condition \eqref{2b<}.
Then, we take a number $h$, say,   $h= 16\, a\, b^2$ such that
inequality \eqref{D-} holds. Assume that $r\geqslant C$ where $C$
is the constant from Lemma \ref{lem4}. Consider  the rectangle
$R_{a,h}$ bounded by the straight lines $\Re\la = r\pm a, \Im\la
=\pm 2h$ and denote by  $R'_{a,h}$ the twice contracted rectangle
with the same center at the point $r$. Lemma \ref{lem1} together
with estimates  \eqref{D+} and \eqref{D-} imply that the variation
of the argument of the function  $D$  along the segment $I_r$
(provided that this segment does not passes through the zeros of
the function $D$) does not exceed $C'(\ln 9 + \ln 2)N$ where $C'$
is an absolute constant. This proves the lemma.

\end{proof}

{\it Step 10.} It follows from Lemma \ref{lem9} that the
difference between the number of zeros and poles of the function
$D$  inside the  rectangle $\mathcal R=\mathcal R_r$ does not
exceed $C N \leqslant C'(b^2 l)N$ where $C'$  is an absolute
constant. Note also that by construction of $T_r$ we have
$0\leqslant n(r, T) -n(r, T_r) \leqslant N$. Therefore,  formula
\eqref{WA} gives $|n(r, A) -n(r, T)| \leqslant C N$, provided that
$r\geqslant C_0$. Taking $C_1 = n(C_0,T)$ we get the assertion of
Theorem~1. We have only to explain what to do with exceptional
values of $r$ when the segment $I_r$ passes through the zeros of
the function $D$ which coincide with the eigenvalues of $A$. To
explain this we remark that all these zeros of the function $D$
lie in the rectangle $R'(a,h)$ and by virtue of Lemma \ref{lem1}
the number of these zeros is $\leqslant CN$. Therefore, the jump
of the function $n(r, A)$  does not exceed this value, and the
relation \eqref{mfor} remains valid for all $r\in \mathbb R^+$.
This ends the proof of Theorem~1 for the case $\alpha = 1$.

{\it Step 11.} Let $\alpha >0$ and $\gamma : = 2\beta +\alpha -1$,
$0\leqslant\gamma <1$.  An important step in the proof of the
theorem for the case $\alpha =1$ was made in Lemma \ref{lem4}. In
the general case we also have the estimate
\begin{equation}\label{B}
\sum_{k=1}^\infty\frac{\|B\vp_k\|^2}{| \la-\mu_k|^2} \leqslant
 b^2\sum_{k=1}^\infty\frac{\mu_k^{2\beta}}{(\sigma-\mu_k)^2
 +\tau^2} =
  b^2\int_1^\infty \frac{t^{2\beta}\, d n(t)}{(\sigma-t)^2
 +\tau^2}.
\end{equation}

Since the sequence $\{\mu_k\}_{k=1}^\infty$ is
$\alpha$-non-condensing, the function  $n(\xi^{1/\alpha})=
n(\xi^{1/\alpha},\, T)$ by virtue of Lemma \ref{lem3} can be
represented in the form
$$
n(\xi^{1/\alpha}) =\psi(\xi) +\zeta(\xi), \quad 0\leqslant
\psi'(\xi) \leqslant l, \ \, |\zeta(\xi)| \leqslant l.
$$
Denote  $\lambda =\sigma +i\tau$, $r^- = r- 2ar^\gamma,\ r^+ =
r+2ar^\gamma$ and assume that this interval does not contain  the
eigenvalues of the operator $T$.  Then, we can rewrite the
integral in the right hand-side of \eqref{B} as follows
\begin{multline}\label{est1}
\int_1^\infty \frac{\xi^{2\beta/\alpha} \,
d[\psi(\xi)+\zeta(\xi)]}{(\sigma-\xi^{1/\alpha})^2 +\tau^2}
\leqslant \\
l\int_1^\infty \frac{\xi^{2\beta/\alpha} \,
d\xi}{\left(\sigma-\xi^{1/\alpha}\right)^2 +\tau^2} +
\int_1^\infty\left|\, \left[ \frac{\xi^{2\beta/\alpha}}
{\left(\sigma-\xi^{1/\alpha}\right)^2 +\tau^2}\right]'\, \right|\
|\zeta(\xi)|\, d\xi.
\end{multline}

The second integral in the right hand-side of \eqref{est1} obeys
the estimate
\begin{equation}\label{est2}
\leqslant l\int_1^\infty \left[\frac{2\beta t^{2\beta -1}}
{(\sigma -t)^2 +\tau^2} + \frac{2 t^{2\beta}} {\left[(\sigma -t)^2
+\tau^2\right]^{3/2}}\right]\, dt.
\end{equation}
 Recalling that $\mu_k \notin (r^-, r^+)$ we can replace the
 integral $\int_1^\infty$ by $\int_1^{r^-} +\int_{r^+}^\infty$.
 Then, the left hand-side in \eqref{est1} is subject to estimate
 (for all $\tau \in \mathbb R$)
 \begin{equation}\label{est3}
 \leqslant \alpha l\left(\int_1^{r^-}
 +\int_{r^+}^\infty\right)\left[
 \frac{ t^\gamma}{(\sigma - t)^2 +\tau^2}  +\frac{ 2\beta t^{2\beta
 -1}}{(\sigma - t)^2 +\tau^2}
 + \frac{ 2t^{2\beta}}{(\sigma - t)^3}\right]\, dt
 \end{equation}
Here the integral from the first summand  can be estimated in the
same way as in Lemma \ref{lem4}, namely, it is $\leqslant
\frac{(1+2^\gamma)}a +C r^{\gamma -1}$. There are no problems with
the estimation of the integral from the second summand because
$2\beta -1 < \gamma$. Finally, let us estimate, for example, the
first integral from the third summand. We have
$$
\int_1^{r^-} \frac{t^{2\beta}}{(\sigma-t)^3}\, dt \leqslant
\sigma^{2\beta} \int_{ar^\gamma}^{\sigma -1} \frac 1{x^3}\, dx
\leqslant \frac{\sigma^{2\beta}}{a^2 r^{2\gamma}} \leqslant \frac
2{a^2},
$$
provided that $r>C=C(a,\beta)$ and $\beta \leqslant \gamma$. In
the case $\beta > \gamma = 2\beta +\alpha -1$ we have to put
$\gamma =\beta$. Then the last estimate holds. The previous
estimates  for the first and the second summand in \eqref{est3}
are also valid because $2\beta +\alpha -1 <\beta$.

Therefore, we have proved that the assertion of Lemma \ref{lem4}
remains valid in the general case $\alpha >0$ if the number
$2\beta$ is replaced by $\gamma =\max (0,\, \beta,\, 2\beta+\alpha
-1)$. All the other arguments in proving Theorem 1 for the general
case remain the same with obvious changes.

\bigskip
\bigskip
Department of Mechanics and Mathematics,

Moscow Lomonosov State University

\medskip

 e-mail: ashkalikov$@$yahoo.com

\end{document}